\theoremstyle{plain}
\newtheorem{theorem}{Theorem}
\newtheorem{lemma}[theorem]{Lemma}
\newtheorem{proposition}[theorem]{Proposition}
\newtheorem{corollary}[theorem]{Corollary}
\newtheorem{remark}[theorem]{Remark}
\newtheorem{definition}[theorem]{Definition}
\newtheorem{example}[theorem]{Example}
\def\blue{\textcolor{blue}}
\title{\blue{Motion planning algorithms for Configuration Spaces}}
\author{Hugo Mas-Ku, Enrique Torres-Giese}
\begin{document}
\begin{abstract} We provide explicit motion planners for Euclidean configuration spaces. 
This allows us to recover some known values of the topological complexity and the Lusternik-Schinirelman category of these spaces.  
\end{abstract}
\maketitle

\section{Introduction}

The Topological Complexity (TC) of a space $X$ is, in practical terms, the smallest number of local domains in each of which there is a 
continuous motion planning algorithm. This number turns out to be a homotopy invariant and is denoted by $TC(X)$, see~\cite{farber}. 
Recall that the space of configurations of $k$ labeled points in $X$ is given by
\[ F(X,k) = \{ (x_1,\ldots,x_k)\in X^k: x_i \neq x_j \} .\]

This space turns out to be of crucial importance in algebraic topology and of course in many of its applications in fields such as robotics.
This latter connection arises by noticing that a path in $F(\mathbb{R}^n,k)$ is essentially a set of $k$ non-colliding paths in $\mathbb{R}^n$.

A related concept is that of the orbit configuration space of a $G$-space $X$, which is defined as
\[ F_G(X,k) = \{ (x_1,\ldots,x_k)\in X^k: Gx_i\neq Gx_j \} ,\]
where $Gx= \{ gx: g\in G \}$ is the orbit $x$. For instance, the space $F_{O(n)}(\mathbb{R}^n,k)$, where $O(n)$ is the linear orthogonal group, is the subspace of $F(\mathbb{R}^n,k)$ consisting of configurations whose components are vectors of different lengths. Note that $F_1(X,k)$, where 1 is the trivial group, is just the space of configurations of $k$ labeled points in $X$.

We will construct explicit motion planners on $F(\mathbb{R}^n,k)$ that realize the value of its TC when $k$ is odd, and when $k$ is even this
number of motion planners is just one unit off the actual value of its TC. Before embarking into the construction of these planners we will 
provide a lower bound for their TC by constructing a retract of $F(\mathbb{R}^n,k)$ that realizes the value of TC of $F(\mathbb{R}^n,k)$ when $n$ is
odd, as well as the TC of some orbit configuration spaces.

The values of TC and the Lusternik-Schnirelmann category ($cat$) of $F(\mathbb{R}^n,k)$, had already been computed (\cite{fy},~\cite{fg},~\cite{roth}), and for $n,k\geq 2$ are given by
\[
TC(F(\mathbb{R}^n,k) )=\left\{ \begin{array}{cc}
2k -1 & n \mbox{ odd} \\
2k- 2 & n \mbox{ even} \end{array} \right. ,
\]
and
\[ cat(F(\mathbb{R}^n,k)) =k.\]
The conditions $n,k\geq 2$ guarantee the space $F(\mathbb{R}^n,k)$ is a non-contractible, connected space.
The contribution of this paper is two-fold, on one hand we show that no sophisticated machinery is needed to compute $cat$ nor TC when $n$ is odd; and on the other 
we find explicit motion planning algorithms for $F(\mathbb{R}^n, k)$ with $2k-1$ local rules, solving a problem posed in~\cite{farber}. Previous motion planning algorithms described 
in~\cite{farber2} consisted of $k^2-k+1$ local rules.

\section{Retracts for Configuration Spaces}

In this section we will show that there exist retracts given by products of spheres sitting in the configuration spaces that we will consider. 
This will allow us to obtain lower bounds for $cat$ and TC.

\begin{proposition}\label{retraction} If $G$ is a subgroup of $O(m+1)$, then there are retractions
\[  (S^m)^{k-1}\hookrightarrow F_G(\mathbb{R}^{m+1}, k) \to (S^m)^{k-1} \]
and
\[ (S^m)^k\hookrightarrow F_G(\mathbb{R}^{m+1}\setminus \{ 0 \} , k) \to (S^m)^k .\]
\end{proposition}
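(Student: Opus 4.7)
The plan is to build both retractions by hand using the following elementary observation: since $G\leq O(m+1)$ acts by norm-preserving linear maps, every orbit $Gx$ lies in the sphere of radius $\|x\|$, so $\|x\|\neq \|y\|$ forces $Gx\neq Gy$. Also, because $e\in G$, the condition $Gx_i\neq Gx_j$ automatically implies $x_i\neq x_j$ (contrapositive: $x_i=x_j$ yields $Gx_i=Gx_j$). This reduces ``disjoint orbits'' to the much easier ``distinct norms'' throughout.

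For the first retraction, I would scale each sphere coordinate by a different radius and place the origin as one of the points. Define
\[
i\colon (u_1,\ldots,u_{k-1})\longmapsto (0,\,u_1,\,2u_2,\,\ldots,\,(k-1)u_{k-1}).
\]
The $k$ entries have pairwise distinct norms $0,1,\ldots,k-1$, so by the observation they lie in disjoint $G$-orbits. For the retraction, the origin is not canonically attached to a general configuration, so I would use $x_1$ as a basepoint and set
\[
r(x_1,\ldots,x_k)=\left(\frac{x_2-x_1}{\|x_2-x_1\|},\,\ldots,\,\frac{x_k-x_1}{\|x_k-x_1\|}\right),
\]
which is well defined and continuous because $x_j\neq x_1$ for $j\geq 2$. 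A direct substitution shows $r\circ i=\mathrm{id}_{(S^m)^{k-1}}$.

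For the second retraction the origin has been removed, so the configuration itself surrounds the origin and no basepoint shift is needed. I would include via $(u_1,\ldots,u_k)\mapsto (u_1,2u_2,\ldots,ku_k)$ (all entries nonzero with distinct norms $1,\ldots,k$) and retract by the radial projection $(x_1,\ldots,x_k)\mapsto (x_1/\|x_1\|,\ldots,x_k/\|x_k\|)$, which is well defined on $(\mathbb{R}^{m+1}\setminus\{0\})^k$. Again $r\circ i=\mathrm{id}$ by direct computation.

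There is no real obstacle: the only genuinely conceptual step is the orthogonality observation, after which the formulas essentially write themselves. The one wrinkle worth flagging is that in the first case one cannot use a plain radial projection $x_j\mapsto x_j/\|x_j\|$ (a point of the configuration might be $0$, and more importantly the composition $r\circ i$ would not be the identity), which is what motivates choosing $x_1$ as a basepoint.
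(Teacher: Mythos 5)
Your proof is correct, and it takes a genuinely different and in fact shorter route than the paper. The paper includes the product of spheres via cumulative sums $u_1+3u_2+\cdots+3^{j-1}u_j$ weighted by powers of $3$, and then the entire technical content of its proof is the estimate showing these partial sums have pairwise distinct norms (the $|u|=|u+v|\Rightarrow |v|\le 2|u|$ argument); the retraction is then given by normalizing \emph{consecutive} differences $N(y_{j+1}-y_j)$, which recovers $3^{j-1}u_j$ up to normalization. You instead scale each sphere coordinate independently by a distinct positive integer, so the distinct-norms condition --- which, as you correctly isolate, is all that is needed once $G\le O(m+1)$ --- is immediate, and you retract via differences from the basepoint $x_1$ (respectively radial projection in the punctured case). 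Your identities $r\circ i=\mathrm{id}$ check out, and your observations that $Gx_i\neq Gx_j$ forces $x_i\neq x_j$ (so the basepoint differences are defined) and that a plain radial projection fails in the unpunctured case are exactly the right points to flag. The only thing the paper's construction is later used for beyond the proposition is the Remark that each component of its inclusion has norm at least $1$ (to avoid a set of obstacles of small norm); your inclusion has components of norms $1,\ldots,k$, so it serves that purpose equally well. Net effect: your argument eliminates the only nontrivial estimate in the paper's proof at no cost.
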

\begin{proof}  To see these let
\[ \alpha_1: (S^m)^{k-1} \to F_G(\mathbb{R}^{m+1} , k) \]
\[ (x_1,\ldots,x_{k-1}) \mapsto (0,x_1,x_1+3x_2,\ldots, x_1+3x_2+\cdots+3^{k-2}x_{k-1} ) \]
and
\[ \beta_1: F_G(\mathbb{R}^{m+1} , k) \to (S^m)^{k-1} \]
\[ (y_1,\ldots,y_k) \mapsto (N(y_2- y_1),\ldots, N(y_k - y_{k-1})) ,\]
where $N(y) = y/|y|$.  Similarly, let us define
\[ \alpha_2: (S^m)^k \to F_G(\mathbb{R}^{m+1}\setminus \{ 0 \} , k) \]
\[ (x_1,\ldots,x_k) \mapsto (x_1,x_1+3x_2,\ldots, x_1+3x_2+\cdots+3^{k-1}x_k ) \]
and
\[ \beta_2: F_G(\mathbb{R}^{m+1}\setminus \{ 0 \} , k) \to (S^m)^k \]
\[ (y_1,\ldots,y_k) \to (N(y_1),N(y_2- y_1),\ldots, N(y_k - y_{k-1})) \]

These maps satisfy $\beta_i\circ\alpha_i = 1$, for $i=1,2$.
By definition $\beta_i$ lands in the respective product of spheres. One only needs to
show that the map
\[ \alpha_2(x_1,\ldots, x_k) = (x_1, x_1 + 3x_2,\ldots, x_1 + 3x_2 + \dots + 3^{k-1}x_k )\]
does land in $ F_G(\mathbb{R}^{m+1}\setminus \{ 0\} , k)$. The case of $\alpha_1$ is analogous.
It is easy to see that each coordinate of this map is non-zero. If $A(x_1+\cdots+3^{l-1}x_l) = x_1+\cdots+ 3^{l+p-1} x_{l+p}$ for some $A\in G\subseteq O(m+1)$ and $p\geq 1$, then 
$|x_1+\cdots+3^{l-1}x_l| = |x_1+\cdots+ 3^{l+p-1} x_{l+p}|$. Now, if any two vectors
$u$ and $v$ satisfy $|u|= |u+v|$, then $|v| \leq 2|u|$. Thus, if we take $u=x_1+\cdots+3^{l-1}x_l$ and
$v=  3^lx_{l+1}+\cdots+ 3^{l+p-1} x_{l+p}$, then
\[ 3^l| x_{l+1}+\cdots+ 3^{p-1} x_{l+p}| \leq 2|x_1+\cdots+3^{l-1}x_l|<3^l, \]
So $| x_{l+1}+\cdots+ 3^{p-1} x_{l+p}|< 1$. On the other hand,
\[ 1\leq \frac{3^{p-1} +1}{2} \leq |3^{p-1} - |x_{l+1}+\cdots+ 3^{p-2} x_{l+p-1}||\leq |x_{l+1}+\cdots+ 3^{p-1} x_{l+p}| ,\]
a contradiction. Therefore the vector $(x_1, x_1 + 3x_2,\ldots, x_1 + 3x_2 + \dots + 3^{k-1}x_k )$ does live in the configuration space
$F_G(\mathbb{R}^{m+1} \setminus \{ 0 \}, k)$.  \end{proof}

\begin{remark} The arguments in the proof of the previous result can be used to show that there is also a retraction
\[ (S^m)^k\hookrightarrow F(\mathbb{R}^{m+1} \setminus Q_r, k) \to (S^m)^k\]
where $Q_r$ is a subset of fixed points with $r$ elements. This follows from the homeomorphism induced between configuration
spaces by a homeomorphism between $\mathbb{R}^{m+1} \setminus Q_r$ and $\mathbb{R}^{m+1} \setminus \overline{Q}_r$, where
$\overline{Q}_r$ is a subset of $r$ fixed points of norm less than one (note that each component of the map $\alpha_2$ is a vector of norm greater than or equal to 1).
The space $F(\mathbb{R}^{m+1} \setminus Q_r, k)$ is related to the  collision free motion planning problem in the presence of multiple moving obstacles, see~\cite{fgy}. 
\end{remark}

\begin{theorem}\label{tc-inequalities} Suppose that $Q_r$ is a set of $r$ points in $\mathbb{R}^{m+1}$, then
\begin{itemize}
\item[(1)] \[ cat(F(\mathbb{R}^{m+1}\setminus Q_r, k)) =\left\{\begin{array}{cl}
 k & \mbox{if } r=0\\
k+1 & \mbox{if } r>0\end{array}\right.\]
\item[(2)] \[ TC(F(\mathbb{R}^{2m+1}\setminus Q_r, k)) =\left\{\begin{array}{cl}
 2k-1 & \mbox{if } r=0\\
2k+1 & \mbox{if } r>0\end{array}\right.\]
\end{itemize}
Suppose that $G$ is a finite subgroup of $O(2m+1)$ acting freely on $\mathbb{R}^{2m+1}\setminus \{ 0 \}$, then
\begin{itemize}
\item[(3)] \[ cat(F_G(\mathbb{R}^{m+1}\setminus \{ 0\}, k)) =k+1\]
\item[(4)] \[TC( F_G(\mathbb{R}^{2m+1} \setminus \{ 0 \}, k)) = 2k +1.\]
\end{itemize}
\end{theorem}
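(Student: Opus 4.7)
The plan is to combine Proposition \ref{retraction} and its subsequent remark with classical computations of $cat$ and $TC$ for products of spheres, reducing each lower bound to a standard invariant of a retract. The matching upper bounds will then come from the cited literature in the unpunctured trivial-$G$ case and from fibration arguments in the punctured and orbit cases.

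First, I would record two standard facts. By cup length in mod-$2$ cohomology, $cat((S^m)^j) = j+1$. By zero-divisor cup length in rational cohomology (the class $\bar x_i = x_i\otimes 1 - 1\otimes x_i$ satisfies $(\bar x_i)^2 = -2\, x_i\otimes x_i \neq 0$ when $x_i$ is the fundamental class of an even-dimensional sphere), $TC((S^{2m})^j) = 2j+1$; the matching upper bounds on these sphere products follow from the product inequalities $cat(X\times Y)\leq cat(X)+cat(Y)-1$ and $TC(X\times Y)\leq TC(X)+TC(Y)-1$. With these in hand, all four lower bounds of the theorem drop out at once: Proposition \ref{retraction} and its remark exhibit $(S^m)^{k-1}$ or $(S^m)^k$ (with $m$ replaced by $2m$ in parts (2) and (4)) as a retract of the configuration space under consideration, and both $cat$ and $TC$ of a retract are bounded above by those of the ambient space.

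For the matching upper bounds, the cases $r=0$ in (1) and (2) are the classical results from \cite{fy, fg, roth}; moreover, the $TC$ upper bound will be recovered independently by the explicit motion planners constructed in the next section. For $r\geq 1$ and for the orbit configuration spaces, I would proceed inductively on $k$ via the Fadell--Neuwirth fibration, whose fiber at each stage is a complement of finitely many points in $\mathbb{R}^{m+1}$, hence a wedge of $m$-spheres of category $2$ and with known $TC$. The main obstacle is obtaining a \emph{sharp} upper bound in case (4): naive fibration inequalities for $TC$ typically overshoot by one unit, so I would need to exploit the freeness of the $G$-action and the structure of $(\mathbb{R}^{2m+1}\setminus\{0\})/G$ to tighten the induction so that it matches the zero-divisor cup length lower bound produced by the retract.
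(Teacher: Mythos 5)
Your lower-bound argument is exactly the paper's: Proposition~\ref{retraction} and its remark exhibit $(S^m)^{k-1}$ or $(S^m)^k$ (with $m$ even in parts (2) and (4)) as a retract, and the known values $cat((S^m)^j)=j+1$ and $TC((S^{2m})^j)=2j+1$ transfer along the retraction since $cat$ and $TC$ do not increase under domination. That half is complete.

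The upper bounds are where there is a genuine gap, and you have in fact located it yourself: inductive fibration inequalities for $TC$ (and also for $cat$ --- the standard bound $cat(E)\le cat(B)\cdot cat(F)$ is multiplicative, so with $cat(F)=2$ it already doubles rather than adds $1$) do not close up to the sharp values in (1) with $r>0$, in (3), or in (4), and "exploit the freeness of the $G$-action to tighten the induction" is not an argument. The paper uses the Fadell--Neuwirth fibrations for a different purpose: not to propagate $cat$ or $TC$ inequalities, but to show inductively that the total space is $(m-1)$-connected and homotopy equivalent to a finite CW-complex of dimension at most $mk$ (at most $m(k-1)$ when $r=0$ and there are no punctures). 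Then the general estimate $cat(X)\le \dim(X)/(q+1)+1$ for a $q$-connected finite complex gives $cat\le k+1$ (resp.\ $\le k$) on the nose, and $TC(X)\le 2\,cat(X)-1$ gives $2k+1$ (resp.\ $2k-1$), matching every lower bound simultaneously --- including case (4), where no special use of the $G$-action is needed beyond the freeness that guarantees the fiber is a wedge of $g(k-1)+1$ spheres. If you replace your proposed induction on $TC$ by this connectivity--dimension estimate on $cat$ followed by $TC\le 2\,cat-1$, your proof closes; as written, the upper-bound half is incomplete.
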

\begin{proof}
Notice that there are fibrations of the form
\[ \bigvee^{r+k-1} S^m \to F(\mathbb{R}^{m+1}\setminus Q_r,k) \to F(\mathbb{R}^{m+1}\setminus Q_r,k-1) \]
and
\[ \bigvee^{g(k-1)+1} S^m \to F_G(\mathbb{R}^{m+1}\setminus \{0\},k) \to F_G(\mathbb{R}^{m+1}\setminus \{0\},k-1) \]
where $g$ is the order of $G$. 
An inductive argument shows that the space $F_G(\mathbb{R}^{m+1}\setminus \{0\},k)$ is $(m-1)$-connected and homotopy equivalent 
to a finite CW-complex of dimension at most $mk$.  Similarly, the space $F(\mathbb{R}^{m+1}\setminus Q_r,k)$ is $(m-1)$-connected and
homotopy equivalent to a CW complex of dimension at most $m(k-1)$ when $r=0$, and of dimension at most $mk$ when $r>0$. To get a lower bound for
$cat$ and TC of these spaces we just need  to apply Proposition~\ref{retraction}, recall the fact that if $X$ is dominated by $Y$ then $TC(X)\leq TC(Y)$,
and also make use of the known values $TC((S^{2m})^k)=2k+1$ and $cat((S^m)^k) = k+1$.
For the upper bounds we can apply the following two properties: $TC(X)\leq 2cat(X)-1$; and if $X$ is a $q$-connected finite CW-complex then 
$cat(X)\leq \frac{\dim(X)}{q+1} +1$.
\end{proof}

\begin{remark}
The value of $TC(F(\mathbb{R}^{2}\setminus Q_r, k))$ and $TC(F(\mathbb{R}^{3}\setminus Q_r, k))$ had already been computed in~\cite{fgy}, and more recently for any 
Euclidean space in~\cite{gg}.
\end{remark} 

\section{Partitions on Configuration Spaces}

Throughout this section we will be working with the space $F(\mathbb{R}^2,k)$, and we will keep
$k$ fixed. A vector of positive integers $A=(a_1,\ldots,a_l)$ such that $\sum a_i =k$ will be
called a partition of $k$, and we will call the number $|A| = l$ the number of levels of $A$. We will consider the (reverse) lexicographic order on $\mathbb{R}^2$, that
is: $(b_1,b_2)\leq (c_1,c_2)$ if $b_2<c_2$, or if $b_2=c_2$ and $b_1\leq c_1$.

Now, if $x=(x_1,\ldots,x_k)\in F(\mathbb{R}^2,k)$ then there is a unique permutation $\sigma \in \Sigma_k$ such
that $x_{\sigma(1)} <\cdots < x_{\sigma(k)}$. This permutation will be denoted by $\sigma_x$, and if 
$\sigma_x =1$ we will say that $x$ is (lexicographically) ordered.

Let $\pi_2:\mathbb{R}^2\to \mathbb{R}$ be the projection of the second factor. If 
 $x=(x_1,\ldots,x_k)\in F(\mathbb{R}^2,k)$ is (lexicographically) ordered, then there are positive integers $a_1,\ldots,a_l$ such that
\begin{align*}
 \pi_2(x_1)= \cdots =\pi_2(x_{a_1}) & <   \pi_2( x_{a_1+1}) \\
\pi_2(x_{a_1+1})= \cdots =\pi_2(x_{a_1+a_2}) & < \pi_2(x_{a_1+a_2+1}) \\
  & \vdots  \\
\pi_2(x_{a_1+...+a_{l-2}+1})= \cdots =\pi_2(x_{a_1+...+a_{l-1}}) & <   \pi_2(x_{a_1+...+a_{l-1}+1}) \\
 \pi_2(x_{a_1+...+a_{l-1}+1})= \cdots =\pi_2(x_{a_1+...+a_l}) 
\end{align*}

These of course define a partition $(a_1,\ldots,a_l)$ of $k$. This partition will be denoted by $A_x$. 
Note that this partition tells us how the configuration $x$ is sitting in $\mathbb{R}^2$ with respect to
the $y$-axis. In this context, $|A|=l$ is the number of lines parallel to the $x$-axis on which 
the configuration $x$ sits.

\begin{definition} Given a partition $A=(a_1,\ldots,a_l)$ of $k$ and 
$x=(x_1,\ldots,x_k)\in F(\mathbb{R}^2,k)$, we will say that $x$ is an $A$-configuration if 
$A_{\sigma_x(x)} = A$.
\end{definition}

\begin{definition} Given an $A$-configuration 
$x=(x_1,\ldots,x_k)\in F(\mathbb{R}^2,k)$, we will say that $x$ has $|A|$ levels and that
$x_i$ and $x_j$ are on the same level if $\pi_2(x_i)=\pi_2(x_j)$.
\end{definition}

\begin{definition} Given a partition $A$ of $k$ and a permutation $\sigma \in \Sigma_k$, we let
\[ F_{A,\sigma} =\{ x=(x_1,\ldots,x_k)\in F(\mathbb{R}^2,k): \sigma_x = \sigma 
\mbox{  and } x \mbox{ is an } A\mbox{-configuration} \}. \] We also define 
\[ F_A = \bigcup_{\sigma\in \Sigma_k} F_{A,\sigma} \]
\end{definition}
This latter is precisely the subspace of all $A$-configurations.
Note that the subspaces $F_{A,\sigma}$ are disjoint, and that  
\[ F(\mathbb{R}^2,k) = \bigcup_A F_A . \]

\begin{theorem} Suppose that $x \in F(\mathbb{R}^2,k)$ is a limit point of $F_{A,1}$, then $|A_x| \leq |A|$. The equality holds if and only if, 
$A_x = A$.
\end{theorem}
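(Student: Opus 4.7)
The plan is to pick a sequence $x^{(n)}\in F_{A,1}$ with $x^{(n)}\to x$ in $F(\mathbb{R}^2,k)$, transfer the defining (in)equalities of $F_{A,1}$ to $x$ by continuity, and then read off $A_x$ directly. Writing $b_0=0$ and $b_i=a_1+\cdots+a_i$, membership in $F_{A,1}$ is encoded by three families of conditions on each $x^{(n)}$: the strict reverse-lexicographic order $x^{(n)}_1<\cdots<x^{(n)}_k$; the within-block equalities $\pi_2(x^{(n)}_{b_{i-1}+1})=\cdots=\pi_2(x^{(n)}_{b_i})$ for $i=1,\ldots,l$; and the inter-block strict inequalities $\pi_2(x^{(n)}_{b_i})<\pi_2(x^{(n)}_{b_i+1})$ for $i=1,\ldots,l-1$.

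The first step is to verify that $x$ is itself lexicographically ordered, so that $A_x$ is defined in the sense of the paper. Passing to the limit in $x^{(n)}_1<\cdots<x^{(n)}_k$ yields the weak inequality $x_i\leq x_j$ for $i<j$; but $x\in F(\mathbb{R}^2,k)$ has pairwise distinct coordinates, and distinct points are never tied in the reverse lexicographic order, so the inequality is strict and $\sigma_x=1$.

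The second step compares the two level structures. The within-block identities are closed conditions on $F(\mathbb{R}^2,k)$, so they survive in the limit and give $\pi_2(x_{b_{i-1}+1})=\cdots=\pi_2(x_{b_i})$ for every $i$. The inter-block strict inequalities, however, are merely open, so in the limit they yield only the weak inequalities $\pi_2(x_{b_i})\leq \pi_2(x_{b_i+1})$: for each $i$ either the separation survives, or two adjacent blocks fuse into one level of $x$. Consequently the level partition $A_x$ of the lex-ordered configuration $x$ is obtained from $A$ by grouping some (possibly empty) collection of adjacent blocks, i.e., $A_x$ is a coarsening of $A$. This gives $|A_x|\leq |A|$, with equality if and only if no fusion occurs, which is exactly the condition $A_x=A$.

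No serious obstacle is anticipated; the core of the argument is the standard observation that equalities are closed while strict inequalities are open. The only place where one must be slightly careful is the preliminary claim that $x$ is lexicographically ordered, because $A_x$ is not a priori defined otherwise, and this is where distinctness of the coordinates of $x$ is essential.
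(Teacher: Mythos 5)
Your second step is correct and is essentially the paper's argument (the paper tracks the level heights $h_1<\cdots<h_l$ and observes that in the limit they can only collapse), but your first step contains a genuine error. The reverse lexicographic order on $\mathbb{R}^2$ is \emph{not} a closed relation, so you cannot pass to the limit in $x^{(n)}_1<\cdots<x^{(n)}_k$ and conclude $x_i\leq x_j$. Concretely, take $k=2$, $A=(1,1)$ and $x^{(n)}=\bigl((1,0),(0,\tfrac1n)\bigr)$. Each $x^{(n)}$ lies in $F_{(1,1),1}$ since $(1,0)<(0,\tfrac1n)$ in the reverse lex order ($0<\tfrac1n$ in the second coordinate), yet the limit $x=\bigl((1,0),(0,0)\bigr)$ lies in $F(\mathbb{R}^2,2)$ and satisfies $x_2<x_1$, so $\sigma_x\neq 1$. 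The failure occurs exactly when two adjacent blocks fuse: the second coordinates become equal in the limit, and the order then falls back on the first coordinates, about which the hypothesis $x^{(n)}\in F_{A,1}$ says nothing across different blocks. So a limit point of $F_{A,1}$ need not be lexicographically ordered, and your final remark correctly identifies this as the delicate point but resolves it incorrectly.

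Fortunately the error is confined to that preliminary claim and does not contaminate the rest. The partition $A_x$ is by definition $A_{\sigma_x(x)}$, i.e.\ it is determined by the multiset of second coordinates $\{\pi_2(x_1),\ldots,\pi_2(x_k)\}$ alone, independently of whether $\sigma_x=1$. Your second step shows precisely that each block of $A$ lands entirely on one level of $x$ (the within-block $\pi_2$-equalities are closed) and that the block heights remain weakly increasing, so the levels of $x$ are obtained by merging contiguous runs of blocks of $A$; this yields $|A_x|\leq |A|$ and the coarsening description of $A_x$ without any appeal to $\sigma_x=1$. For the equality case, when no blocks fuse the inter-block strict inequalities do survive, and then your distinctness argument (equal $\pi_2$ within a block forces strict $\pi_1$-ordering in the limit) legitimately recovers $\sigma_x=1$ and $A_x=A$. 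With the first step deleted and replaced by the observation that $A_x$ depends only on the second coordinates, your proof is correct and follows the same route as the paper's.
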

\begin{proof}
Suppose that $|A|=l$. Note that any element of a sequence of (lexicographically) ordered 
$A$-configurations converging to $x$ defines a set of increasing real numbers $h_1<\cdots< h_l$ which are determined by the map $\pi_2$.
Moreover, this latter set of real numbers depends continuously on the sequence converging to $x$. The position of the levels of $x$ is
determined by the limit of these real numbers, and since some of these may collapse into a single real number in the limit, it follows that $|A_x| \leq |A|$. 

For the second part, it suffices to show that if $|A_x| = |A|$ then $A_x = A$. This can be seen by
noticing that the condition $|A_x| = |A|$ tells us that the levels determined by the sequence do 
not collapse resulting in a smaller number of levels when converging to $x$, and hence $x$ must be an $A$-configuration.
\end{proof}

Note that since the subspaces $F_{A,\sigma}$ and $F_{A,\mu}$ are homeomorphic for any two
permutations $\sigma,\mu$, it follows that the latter result holds for any $F_{A,\sigma}$.

\begin{corollary}\label{limit_point} Suppose that $(x,y) \in F(\mathbb{R}^2,k) \times F(\mathbb{R}^2,k)$ is a limit point of $F_{A,\sigma}\times F_{B,\mu}$. Then $|A_x| + |A_y| \leq |A| + |B|$, and the equality holds if
and only if $A_x = A$ and $B_y = B$.
\end{corollary}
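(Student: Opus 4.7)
The plan is to reduce the assertion to the preceding theorem applied coordinatewise. First I would unpack the hypothesis: if $(x,y)$ is a limit point of $F_{A,\sigma}\times F_{B,\mu}$, then there is a sequence $(x_n,y_n)$ in this product with $(x_n,y_n)\to (x,y)$. Continuity of the two coordinate projections of $F(\mathbb{R}^2,k)\times F(\mathbb{R}^2,k)$ then forces $x_n\to x$ with every $x_n\in F_{A,\sigma}$ and $y_n\to y$ with every $y_n\in F_{B,\mu}$. Hence $x$ is individually a limit point of $F_{A,\sigma}$ and $y$ is individually a limit point of $F_{B,\mu}$.

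Next I would invoke the previous theorem on each factor. Although that theorem is literally stated for $F_{A,1}$, the remark immediately following its proof records the analogous statement for any $F_{A,\sigma}$ by the homeomorphism between these pieces. Applied to the sequence $x_n\to x$ in $F_{A,\sigma}$ this yields $|A_x|\leq |A|$, with equality precisely when $A_x=A$; applied to $y_n\to y$ in $F_{B,\mu}$ it yields $|A_y|\leq |B|$, with equality precisely when $A_y=B$. Adding these two inequalities produces the desired bound $|A_x|+|A_y|\leq |A|+|B|$.

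The equality clause is then purely formal. The nontrivial direction says that $|A_x|+|A_y|=|A|+|B|$ forces $A_x=A$ and $A_y=B$: if either individual bound were strict, the sum could not saturate $|A|+|B|$, so both componentwise inequalities must be equalities, and the equality case of the previous theorem identifies the partitions. The converse direction is immediate. I do not foresee any genuine obstacle, since the corollary is essentially a direct product reformulation of the preceding theorem. The only point worth mentioning explicitly is the appeal to the homeomorphism remark, which is what allows us to apply the theorem to $F_{A,\sigma}$ and $F_{B,\mu}$ with arbitrary $\sigma,\mu$ rather than the identity.
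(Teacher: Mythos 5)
Your proposal is correct and follows essentially the same route as the paper: the paper's proof likewise reduces to the single-factor theorem applied to each coordinate and then invokes exactly the arithmetic observation you use (if $a\geq c$ and $b\geq d$ then $a+b\geq c+d$, with equality iff $a=c$ and $b=d$). You merely spell out the reduction (sequences, projections, and the remark extending the theorem from $F_{A,1}$ to arbitrary $F_{A,\sigma}$) that the paper leaves implicit.
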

\begin{proof}
The result follows from the following observation: if $a,b,c,d$ are postive real numbers such that 
$a\geq c$, $b\geq d$, then $a+b \geq c+d$, and the equality holds if and only if $a=c$ and
$b = d$.
\end{proof}

Recall that a space $X$ is called ENR (Euclidean Neighborhood Retract) if it is homeomorphic to a subspace $X'$ of some 
$\mathbb{R}^N$ such that $X'$ is a retract of an open neighborhood $X'\subset U\subset \mathbb{R}^N$. Here we recall a 
definition of TC from~\cite{farber}, Proposition 4.12.

\begin{definition} Suppose that $X$ is an ENR. The topological complexity of $X$ is the smallest integer $r$ such that there 
exists a section $s:X\times X \to X^I$ of the double-evaluation map $\epsilon:X^I\to X\times X$ and
a splitting $F_1\cup\cdots\cup F_r = X\times X$ such that: 
\begin{enumerate}
\item $F_i\cap F_j =\emptyset$ when $i\neq j$,
\item the restriction of $s$ to each $F_i$ is continuous, and
\item each $F_i$ is a locally compact subspace of $X\times X$.
\end{enumerate}
\end{definition}

\begin{definition} Given $i\in \{ 2,...,2k\}$, we let
\[ F_i = \bigcup_{|A|+|B| = i} F_A \times F_B .\]
\end{definition}

Note that these $F_i$ are disjoint and they cover $F(\mathbb{R}^2,k)\times F(\mathbb{R}^2,k)$.

\begin{example} When $k=3$, the first two $F_i$ are given as follows
\begin{align*}
 F_2 & = F_{(3)} \times F_{(3)},  \\
 F_3 &= F_{(3)} \times F_{(1,2)} \cup F_{(3)} \times F_{(2,1)} \cup F_{(1,2)} \times F_{(3)} \cup F_{(2,1)} \times F_{(3)}.
\end{align*}
\end{example}

\begin{lemma}\label{enr}  Each $F_{A,\sigma}$, $F_A$ and $F_i$ are locally compact, locally contractible, and hence ENR.
\end{lemma}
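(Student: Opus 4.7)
The plan is to verify local compactness and local contractibility for each of the three kinds of subspaces; once these are established, the ENR conclusion will follow from the classical fact that every locally compact, locally contractible subspace of a Euclidean space is an ENR.

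For the base case $F_{A,\sigma}$ with $A=(a_1,\ldots,a_l)$, I would parametrize a configuration by $l$ strictly increasing y-values $h_1<\cdots<h_l$ (the positions of the levels), together with, on each level $j$, a tuple of $a_j$ strictly increasing x-values arranged in the order dictated by $\sigma$. Collecting these parameters realizes $F_{A,\sigma}$ as an open convex subset of $\mathbb{R}^{l+k}$, which is a locally compact, contractible manifold. For $F_A=\bigcup_\sigma F_{A,\sigma}$, the key observation is that $\sigma_x$ is locally constant on $F_A$: all configurations are collision-free, so the lexicographic order is determined by strict inequalities that are preserved under any small perturbation respecting the $A$-configuration structure. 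Hence $F_A$ is a finite disjoint union of the clopen pieces $F_{A,\sigma}$, and both local properties transfer; the product $F_A\times F_B$ inherits them from its two factors.

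For $F_i=\bigcup_{|A|+|B|=i}F_A\times F_B$ it suffices to show that each $F_A\times F_B$ is clopen in $F_i$. Given a sequence $(x_n,y_n)\in F_i$ converging to $(x,y)\in F_A\times F_B$, I would pass to a subsequence landing in a single stratum $F_{A',\sigma'}\times F_{B',\mu'}$ with $|A'|+|B'|=i$ (possible because the set of partitions of $k$ and the symmetric group $\Sigma_k$ are finite), and then invoke Corollary~\ref{limit_point} to obtain $|A|+|B|\le|A'|+|B'|=i=|A|+|B|$; the equality case of the corollary then forces $A=A'$ and $B=B'$, so the subsequence, and hence the original sequence eventually, lies in $F_A\times F_B$. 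Openness is immediate from the local constancy of partition types in each factor. Thus $F_i$ is a finite disjoint union of clopen subsets, each locally compact and locally contractible, which completes the argument.

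The routine steps are the parametrization of $F_{A,\sigma}$ and the local-constancy decomposition of $F_A$. The delicate step is the clopen decomposition of $F_i$, where one must extract from Corollary~\ref{limit_point} not just the inequality on total level counts but also its sharp equality condition, in order to rule out different partition strata clustering at a common limit point in $F_i$.
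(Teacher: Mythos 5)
Your proof is correct and follows essentially the same route as the paper: parametrize $F_{A,\sigma}$ as an open subset of $\mathbb{R}^{k+l}$ via the level heights and the within-level $x$-coordinates (the paper phrases this as a homeomorphism $F_{A,1}\cong F(\mathbb{R},a_1)\times\cdots\times F(\mathbb{R},a_l)\times \tilde{F}(\mathbb{R},l)$), then invoke the Borsuk--Dold criterion that a locally compact, locally contractible subspace of $\mathbb{R}^N$ is an ENR. Where you go beyond the paper is in justifying why the finite unions $F_A$ and $F_i$ inherit these local properties: the paper simply asserts this, whereas you show the strata are clopen in the union, using local constancy of $(A,\sigma)$ and the equality case of Corollary~\ref{limit_point} --- the latter being precisely the content the paper defers to the lemma that follows this one. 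That extra care is warranted, since local compactness is not preserved by arbitrary unions of locally compact pieces. One small phrasing issue: from ``some subsequence lands in $F_A\times F_B$'' you cannot conclude that the original sequence is eventually there; the standard fix is to argue that every subsequence admits a further subsequence in a single stratum, which the corollary then forces to be a stratum of $F_A\times F_B$.
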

\begin{proof}
If $A=(a_1,\ldots,a_l)$ is a partition of $k$, then there is a homeomorphism
\[ F_{A,1} \to F(\mathbb{R},a_1)\times\cdots\times F(\mathbb{R},a_l)\times \tilde{F}(\mathbb{R},l) ,\]
where $\tilde{F}(\mathbb{R},l) = \{ (h_1,\ldots,h_l)\in \mathbb{R}^l: h_1 < \cdots< h_l \}$. 
This homeomorphism is obtained by projecting each level onto the $x$-axis, and by
projecting each level onto the $y$-axis. Therefore $F_{A,1}$ is homeomorphic to an open set
of $\mathbb{R}^{k+l}$, and thus each $F_{A,\sigma}$,  $F_A$, and $F_i$ are locally compact, and locally contractible.
Finally, a subspace of $\mathbb{R}^N$ is an ENR if and only if, it is locally compact and locally contractible \cite{dold}.
\end{proof}

\begin{lemma}  If $V=F_{A,\sigma} \times F_{B,\mu}\subset F_i$ and $(x,y) \in \overline{V} - V$, then $(x,y)\in F_j$ for some $j<i$. 
\end{lemma}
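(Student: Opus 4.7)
The plan is to use Corollary \ref{limit_point} as the workhorse and then upgrade its partition-level conclusion to a permutation-level conclusion. Since $V\subset F_i$ means $|A|+|B|=i$, any limit point $(x,y)\in\overline{V}$ satisfies $|A_x|+|A_y|\leq i$ by the corollary, and the definition of $F_j$ places $(x,y)\in F_{|A_x|+|A_y|}$. The entire content of the lemma is therefore the strict inequality $|A_x|+|A_y|<i$ whenever $(x,y)\notin V$.

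Suppose toward contradiction that $|A_x|+|A_y|=i$. Then the equality clause of Corollary \ref{limit_point} gives $A_x=A$ and $A_y=B$, so $x\in F_A$ and $y\in F_B$. To derive a contradiction it suffices to show that in fact $\sigma_x=\sigma$ and $\sigma_y=\mu$, since this places $(x,y)$ back into $V$. Fix a sequence $(x_n,y_n)\in V$ with $(x_n,y_n)\to(x,y)$. Because $A_x=A$, the $l=|A|$ level heights $h_1^n<\cdots<h_l^n$ of $x_n$ do not collapse in the limit, and so for $n$ large the partition of $\{1,\dots,k\}$ into levels (as prescribed by $\sigma$) is identical in $x_n$ and in $x$. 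Within each level, $\sigma$ records a strictly increasing order of first coordinates, and because $x$ has distinct points on each level, these strict inequalities survive passage to the limit. Combining the preserved inter-level order (by $y$-coordinate) with the preserved intra-level order (by $x$-coordinate) recovers the full lex order, so $\sigma_x=\sigma$. The same reasoning applied to $(y_n)\to y$ yields $\sigma_y=\mu$, and therefore $(x,y)\in V$, contradicting the hypothesis.

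The main obstacle, beyond the direct appeal to Corollary \ref{limit_point}, is precisely this upgrade from partitions to permutations in the equality case. The key observation is that the lex order decomposes into a strict inter-level comparison governed by $y$-coordinates and a strict intra-level comparison governed by $x$-coordinates; when no levels collapse and the limit points are all distinct, both strict orderings persist in the limit, forcing the permutation to be preserved.
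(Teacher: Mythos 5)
Your proposal is correct and follows essentially the same route as the paper: apply Corollary~\ref{limit_point}, note that $(x,y)$ lies in $F_{|A_x|+|A_y|}$, and rule out equality by showing it would force $(x,y)$ back into $V$. The only difference is that you spell out in detail the step the paper merely asserts (that a limit point of $F_{A,\sigma}$ with $|A_x|=|A|$ already lies in $F_{A,\sigma}$), and your justification of that step—non-collapsing levels preserve the inter-level order, and distinctness of limit points within a level preserves the strict intra-level order—is sound.
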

\begin{proof}
Note that if $x$ is a limit point of $F_{A,\sigma}$ and $|A_x|=|A|$, then $x\in F_{A,\sigma}$. Now apply Corollary~\ref{limit_point}.
\end{proof}

\begin{lemma} Suppose that $U$ and $V$ are disjoint subspaces of $\mathbb{R}^N$ such that $\overline{U}\cap V$ and 
$U\cap \overline{V}$ are empty. If $f$ is a function from $\mathbb{R}^N$ to $\mathbb{R}^M$ such that $f$ restricted to both $U$ and $V$ is continuous, then $f$ is continuous on $U\cup V$.
\end{lemma}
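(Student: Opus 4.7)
The plan is to verify continuity of $f$ restricted to $U\cup V$ pointwise. Fix $p\in U\cup V$; by symmetry I may assume $p\in U$. The hypothesis $U\cap\overline{V}=\emptyset$ says that $p\notin\overline{V}$, so there exists an open ball $B_{\rho}(p)\subset\mathbb{R}^N$ with $B_{\rho}(p)\cap V=\emptyset$. Consequently
\[
B_{\rho}(p)\cap(U\cup V) \;=\; B_{\rho}(p)\cap U.
\]

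Next I would use the continuity of $f|_U$ at $p$: given $\varepsilon>0$ there exists $\delta_0>0$ with $f(B_{\delta_0}(p)\cap U)\subset B_{\varepsilon}(f(p))$. Setting $\delta=\min(\delta_0,\rho)$, the displayed identity above yields
\[
f\bigl(B_{\delta}(p)\cap(U\cup V)\bigr) \;=\; f\bigl(B_{\delta}(p)\cap U\bigr) \;\subset\; B_{\varepsilon}(f(p)),
\]
which is exactly continuity of $f|_{U\cup V}$ at $p$. The case $p\in V$ is handled identically, using the other hypothesis $\overline{U}\cap V=\emptyset$ to produce a ball disjoint from $U$.

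There is essentially no obstacle here; the only subtlety is recognizing that the two separation hypotheses $\overline{U}\cap V=\emptyset$ and $U\cap\overline{V}=\emptyset$ are precisely what lets one choose, at any point of $U\cup V$, a Euclidean neighborhood meeting only the side to which the point belongs, so that local continuity of $f$ reduces to the already known continuity on that side.
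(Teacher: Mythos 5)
Your proof is correct, and it is the standard argument: the hypotheses $U\cap\overline{V}=\emptyset$ and $\overline{U}\cap V=\emptyset$ give each point of $U\cup V$ a Euclidean ball meeting only its own piece, so continuity of $f|_{U\cup V}$ at that point reduces to continuity of the restriction to that piece. The paper states this lemma without proof, evidently regarding it as elementary, and your argument is exactly the one the authors would have in mind.
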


These latter two results are crucial since they tell us that if we are able to find a planner on $F_i$ then 
it will be continuous on $F_i$ as long as it is continuous on each 
$F_{A_1,\sigma_1}\times F_{A_2,\sigma_2}\subset F_i$.

\section{Motion Planners}

The following result will be a basic ingredient needed to construct motion planners and its proof will be omitted since it is straightforward.

\begin{lemma}\label{distance} Let $\pi_1:\mathbb{R}^2\to \mathbb{R}$ be the projection of the first factor, and define 
$p:(\mathbb{R}^2)^{2k} \to \mathbb{R}$ by $(x_1,\ldots,x_{2k})\mapsto \max_{1\leq j\leq 2k} \{ \pi_1(x_j)\}$.
The map $p$ is continuous, and so is its restriction to $F(\mathbb{R}^2,k)\times F(\mathbb{R}^2,k)$.
\end{lemma}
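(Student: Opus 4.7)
The plan is to recognize $p$ as a composition of elementary continuous operations and then invoke the fact that restrictions of continuous maps are continuous. First I would observe that each coordinate projection $\mathrm{pr}_j : (\mathbb{R}^2)^{2k} \to \mathbb{R}^2$, $(x_1,\ldots,x_{2k})\mapsto x_j$, is continuous, and so is $\pi_1:\mathbb{R}^2\to\mathbb{R}$. Hence each composite $\pi_1 \circ \mathrm{pr}_j : (\mathbb{R}^2)^{2k}\to \mathbb{R}$ is continuous.

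Next I would show that the maximum of finitely many real-valued continuous functions is continuous. The cleanest way is to note the identity
\[
\max(a,b) = \tfrac{1}{2}\bigl(a+b+|a-b|\bigr),
\]
so $\max:\mathbb{R}^2 \to \mathbb{R}$ is continuous, and then iterate to conclude that $(t_1,\ldots,t_{2k})\mapsto \max_j t_j$ is continuous on $\mathbb{R}^{2k}$. Applying this to the tuple $(\pi_1(x_1),\ldots,\pi_1(x_{2k}))$ yields continuity of $p$ on all of $(\mathbb{R}^2)^{2k}$.

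Finally, since $F(\mathbb{R}^2,k)\times F(\mathbb{R}^2,k)$ is (as a set) a subspace of $(\mathbb{R}^2)^{k}\times(\mathbb{R}^2)^k = (\mathbb{R}^2)^{2k}$ and carries the subspace topology, the restriction of the continuous map $p$ remains continuous. There is really no obstacle here; the only thing to be careful about is that the domain of $p$ and the domain $F(\mathbb{R}^2,k)\times F(\mathbb{R}^2,k)$ are identified in the obvious way by concatenating the two $k$-tuples into one $2k$-tuple of points in $\mathbb{R}^2$.
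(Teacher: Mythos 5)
Your argument is correct. The paper omits the proof of this lemma as straightforward, and your decomposition (coordinate projections, continuity of finite maxima via $\max(a,b)=\tfrac12(a+b+|a-b|)$, and restriction to a subspace) is exactly the standard argument the authors surely had in mind.
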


We will define a planner $s_i$ on $F_i$ by means of planners $s_{A,\sigma,B,\mu}$ on each $F_{A,\sigma}\times F_{B,\mu} \subset F_i$, where $i=|A|+|B|$.
Without loss of generality we will provide a recipe only for $F_{A,1}\times F_{B,1}\subset F_i$: 
\begin{enumerate}
\item Take a pair of configurations $(x,y) \in F_{A,1}\times F_{B,1}\subset F_i$.
\item Each level of the $A$-configuration $x$ will be connected by means of straight lines to a set of points 
on a line which is parallel to the $y$-axis and whose $x$-coordinate is given by $p(x,y)+1$. More precisely, if $A_x = (a_1,\ldots,a_l)$
and we let $h_j = \pi_2(x_{a_1+\cdots+a_j})$, then 
\begin{enumerate} 
\item $x_1,\ldots,x_{a_1}$ will be mapped onto the line $X=p(x,y)+1$ by means of straight lines,
$x_1$ will go to the point on the line at height $h_1 - |x_1 - x_{a_1}|$, $x_2$ will go to the point on the
line at height $h_1 - |x_2 -x_{a_1}|$, and so on.
\item For the next level, send $x_{a_1+j}$ to the point on the line $X=p(x,y)+1$ at height 
\[ h_2 - \frac{( a_2 - j  )(h_2 - h_1)}{2(a_2 - 1)}, \] for $1\leq j \leq a_2$.
\item Proceed as in (b) with each level of $x$.
\end{enumerate}
This set of paths define a path $Q_x$ in $F(\mathbb{R}^2,k)$ connecting the configuration $x$ to a configuration sitting on the
line $X=p(x,y)+1$.

\item We proceed with $y$ the same way we did with $x$ to obtain a path $Q_y$ except that in this case we use the line $X=p(x,y)+2$ to
avoid possible collisions in the following step.
\item Let $\alpha_{(x,y)}$ be the path that connects by means of straight lines (following the order of both $x$ and $y$) the configuration $Q_{x}(1)$ to the configuration $Q_y(1)$. 
\item The motion planner is determined by the path from $x$ to $y$ given by $Q_x\cdot\alpha_{(x,y)} \cdot Q_y^{-1}$ (concatenation of paths).
 \end{enumerate}

The following picture illustrates the construction of the path $Q_x$ when $A=(3,2,1,2)$.
\[
\xygraph{
!{<0cm,0cm>;<2cm,0cm>:<0cm,2cm>::}
!{(.4,0) }*+{\bullet^{x_1}} ="x1"
!{(1.2,0) }*+{\bullet^{x_2}} ="x2"
!{(2,0) }*+{\bullet^{x_3}} ="x3"
!{(.8,1) }*+{\bullet^{x_4}} ="x4"
!{(1.8,1) }*+{\bullet^{x_5}} ="x5"
!{(1.5,1.5) }*+{\bullet^{x_6}} ="x6"
!{(1.5,2.5) }*+{\bullet^{x_7}} ="x7"
!{(2.5,2.5) }*+{\bullet^{x_8}} ="x8"
!{(4.6,0) }*+{\bullet^{h_1}} ="h1"
!{(4.5,-.8) }*+{\bullet} ="h12"
!{(4.5,-1.6) }*+{\bullet} ="h13"
!{(4.6,1) }*+{\bullet^{h_2}} ="h2"
!{(4.5,.5) }*+{\bullet} ="h22"
!{(4.6,1.5) }*+{\bullet^{h_3}} ="h3"
!{(4.6,2.5) }*+{\bullet^{h_4}} ="h4"
!{(4.5,2.0) }*+{\bullet} ="h42"
!{(4.5,-2) }*+{} ="A"
!{(4.5,3.4) }*+{X=p(x,y)+1} 
!{(4.5,3.2) }*+{} ="B"
"A"-"B"
"x1":"h13" "x2":"h12"  "x3":"h1"
"x4":"h22" "x5":"h2"
"x6":"h3"
"x7":"h42" "x8":"h4"
}
\]

\begin{theorem} The collection $(F_i,s_i)$, $2\leq i\leq 2k$, forms a set of motion planning algorithms for $F(\mathbb{R}^2,k)$.
\end{theorem}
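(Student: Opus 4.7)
The plan is to verify the three requirements from the definition of TC (Proposition 4.12 of~\cite{farber}) for the proposed collection. The $F_i$ were already observed to be pairwise disjoint and to cover $F(\mathbb{R}^2,k)\times F(\mathbb{R}^2,k)$, and Lemma~\ref{enr} gives each $F_i$ the required local compactness. So the remaining tasks are to check that $s_i(x,y)$ really is a path from $x$ to $y$ in $F(\mathbb{R}^2,k)$ (no collisions), and to promote its piecewise continuity to continuity on all of $F_i$.

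For the non-collision part I would work on the prototype piece $F_{A,1}\times F_{B,1}\subset F_i$ and verify $Q_x$, $\alpha_{(x,y)}$, and $Q_y^{-1}$ in turn. In $Q_x$, the destinations of the $j$-th level (for $j\geq 2$) lie in the interval $[(h_{j-1}+h_j)/2,\,h_j]$, while the first level's destinations lie in $(-\infty,h_1]$. Straight-line motion preserves these $y$-strips, so points on different levels remain separated throughout the homotopy. Within one level, distinct points have distinct starting $x$-coordinates and arrive at distinct heights on the common vertical line $X=p(x,y)+1$, with the destination heights varying strictly monotonically in the index, so no two points on a level ever share a position at any intermediate time. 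The path $Q_y$ is treated identically on the shifted line $X=p(x,y)+2$, and the two sub-paths are supported on disjoint time intervals, so they do not interfere. For $\alpha_{(x,y)}$, both $Q_x(1)$ and $Q_y(1)$ have their $k$ points indexed in strictly increasing height order, so for $i<j$ and $t\in[0,1]$ the inequality $(1-t)\eta_i+t\zeta_i<(1-t)\eta_j+t\zeta_j$ holds, and all points sit on the common vertical line $X=p(x,y)+1+t$ without colliding.

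For continuity, on each piece $F_{A,\sigma}\times F_{B,\mu}$ the path $s_i(x,y)$ is built entirely from straight-line segments whose endpoints depend continuously on $(x,y)$ via coordinate projections and the continuous function $p$ from Lemma~\ref{distance}; hence $s_i$ is continuous on each such piece. To upgrade piecewise continuity to continuity on all of $F_i$, I would invoke the two lemmas at the end of Section~3: for any two pieces $V_1,V_2\subset F_i$, every point of $\overline{V_1}\setminus V_1$ lies in some $F_j$ with $j<i$ (by the lemma built on Corollary~\ref{limit_point}), hence outside $F_i$ and in particular outside $V_2$; the symmetric statement also holds. Thus the pieces of $F_i$ satisfy the disjoint-closure hypothesis of the gluing lemma, and induction on the (finite) number of pieces yields continuity of $s_i$ on $F_i$.

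The step I expect to be most delicate is the collision check for $Q_x$: the levels must remain separated in height throughout the motion, and within a single level the $y$-coordinates must start all equal and become pairwise distinct for every positive time while the $x$-coordinates sweep simultaneously onto the common vertical line. The feature making this work is the placement of each level's destinations inside the midpoint buffer $[(h_{j-1}+h_j)/2,\,h_j]$, which provides a uniform height separation between consecutive levels throughout the straight-line homotopy.
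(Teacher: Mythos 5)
Your proposal is correct and follows exactly the route the paper intends: the paper actually states this theorem without proof, relying on the reader to assemble it from the disjointness/covering of the $F_i$, Lemma~\ref{enr}, Corollary~\ref{limit_point} with the two gluing lemmas, and a direct collision check on the explicit paths $Q_x$, $\alpha_{(x,y)}$, $Q_y^{-1}$. Your write-up supplies precisely those missing verifications (the disjoint height strips $[(h_{j-1}+h_j)/2,\,h_j]$ for level separation, distinct start abscissae versus distinct end ordinates within a level, height-monotone matching for $\alpha_{(x,y)}$, and the closure argument giving continuity on all of $F_i$), so it is the intended proof.
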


\section{Higher dimensions and higher TC}

For simplicity and convenience we will denote the coordinates of $\mathbb{R}^n$ by $z_1,\ldots,z_n$. 
We can extend the ideas of partitions and levels to this scenario: given a configuration $x\in F(\mathbb{R}^n,k)$, each level will be a hyperplane perpendicular to 
the $z_n$--axis  containing a number of elements of $x$, and this number of elements is a component of the partition determined by $x$. 
Now, given $y\in F(\mathbb{R}^n,k)$, we define 
\[ p(x,y) = \max_{1\leq i,j \leq k} \{ \pi_{1}(x_i), \pi_{1}(y_j) \} ,\]
where $\pi_{1}$ is the projection of the first factor, see Lemma \ref{distance}.
Then the elements on a level of $x$ are connected to a configuration on the line $L_{x,y}$ which is parallel to the $z_n$-axis and intersects the $z_1$-axis at $p(x,y)+1$. The recipe spelled out for $\mathbb{R}^2$ works for $\mathbb{R}^n$, the only difference is that we will 
consider the lexicographic order on each level to assign to each point a point on the line $L_{x,y}$ (this is implicit in step (2)(b) for $\mathbb{R}^2$). \\

The concept of higher topological complexity was developed in~\cite{bgrt}. The basic idea is that in this case the motion planning involves a set of $(n-2)$ prescribed
intermediate stages that the system (robot) has to reach. This turns out to be an invariant and it is denoted by $TC_n$. The case $n=2$ is just that of $TC$. The arguments
applied in the proof of Theorem~\ref{tc-inequalities} can be used in this context since the analogous ideas for $TC_n$ are available in~\cite{bgrt}. This allows us to obtain 
 \[ TC_n(F(\mathbb{R}^{2m+1}\setminus Q_r, k)) =\left\{\begin{array}{cl}
 n(k-1)+1 & \mbox{if } r=0\\
nk+1 & \mbox{if } r>0\end{array}\right. ,\]
and if $G$ is a finite subgroup of $O(2m+1)$ acting freely on $\mathbb{R}^{2m+1}\setminus \{ 0 \}$, then
 \[TC_n( F_G(\mathbb{R}^{2m+1} \setminus \{ 0 \}, k)) = nk+1.\]

The value of $TC_n(F(\mathbb{R}^m\setminus Q_r, k))$ was obtained in~\cite{gg}; their arguments, however, are way more elaborate. \\

It is also worth mentioning that the motion planning algorithms described in this paper can also be extended to the case of higher topological complexity. It is not hard to 
see what modifications are needed, and the details are left to the interested reader. 

As we pinpointed in the introduction, the contribution of this paper resides more in the construction of the motion planners. This construction may be of more practical importance than just knowing the value of TC.

\section{LS-category}

The LS-category of $F(\mathbb{R}^n,k)$ has been computed in \cite{roth} and it is equal to $k$ when $n\geq 2$. We will construct a categorical 
cover that realizes this value. Consider the sets 
\[ W_i = \bigcup_{|A|=i} F_A,\]
where $i\in\{ 1,\ldots,k\}$
and notice that they are ENR by Lemma \ref{enr}. Now we use the following result from \cite{dold}.

\begin{lemma} If $W$ is a subspace of $X$ and both are ENR, then there is an open neighborhood $W\subset U \subset X$ and a retraction 
$r : U \to W$ such that the natural inclusion $ j : U \to X$ is homotopic to $i\circ r$, where $i$ is the natural inclusion map of $W$ into $X$.
\end{lemma}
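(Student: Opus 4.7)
The plan is to exploit the defining property of ENRs (being a retract of an open neighborhood in a Euclidean ambient space) simultaneously for $X$ and $W$, and to obtain both the retraction $r$ and the homotopy $j\simeq i\circ r$ by straight-line interpolation in the ambient space followed by the ambient retraction onto $X$.

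First I would fix an embedding of $X$ into some $\mathbb{R}^N$ together with an open neighborhood $V\supset X$ and a retraction $p:V\to X$; such data exist by hypothesis. Now $W$ also lies in $\mathbb{R}^N$, and since being an ENR is an intrinsic property of locally compact, locally contractible separable metric spaces (this is precisely the criterion of~\cite{dold} already invoked in Lemma~\ref{enr}), $W$ is itself an ENR as a subspace of $\mathbb{R}^N$. Hence there also exist an open neighborhood $V'\supset W$ in $\mathbb{R}^N$ and a retraction $\sigma:V'\to W$.

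Next I would shrink $V'$ to an open set $V''\subset V'$ still containing $W$, with the property that for every $x\in V''$ the line segment from $x$ to $\sigma(x)$ is entirely contained in $V$. For each $w\in W$ we have $\sigma(w)=w\in V$, so by continuity of $\sigma$, openness of $V$ and compactness of $[0,1]$ there is an open neighborhood $N_w$ of $w$ in $V'$ on which the entire segment $[x,\sigma(x)]$ remains inside $V$; taking $V''=\bigcup_{w\in W}N_w$ does the job. I would then set $U:=X\cap V''$, which is an open neighborhood of $W$ in $X$, and $r:=\sigma|_U:U\to W$. This is a retraction since $\sigma$ restricts to the identity on $W$.

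Finally I would define $H:U\times I\to X$ by
\[ H(x,t)=p\bigl((1-t)x+t\,\sigma(x)\bigr). \]
The shrinking step guarantees that the convex combination lies in $V$, so applying $p$ is legitimate and $H$ is continuous as a composition of continuous maps. At $t=0$, $H(x,0)=p(x)=x=j(x)$ because $x\in X$ is fixed by $p$; at $t=1$, $H(x,1)=p(\sigma(x))=\sigma(x)=i(r(x))$ because $\sigma(x)\in W\subset X$ is likewise fixed by $p$. The only mildly delicate point is the shrinking argument producing $V''$, where the openness of $V$, continuity of $\sigma$ and compactness of $I$ must be combined; that is the step I expect to require the most care.
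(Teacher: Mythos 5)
Your proof is correct. Note that the paper itself gives no proof of this lemma: it is quoted as a known result from Dold's \emph{Lectures on Algebraic Topology}, so there is nothing in the text to compare against line by line. Your argument is in fact the standard one for this statement (and essentially the one in Dold): realize $X$ as a neighborhood retract in $\mathbb{R}^N$ via $p:V\to X$, use the intrinsic characterization of ENRs to get a Euclidean neighborhood retraction $\sigma:V'\to W$ for the embedded copy of $W$, shrink $V'$ by a tube-lemma argument so that the segments $[x,\sigma(x)]$ stay in $V$, and then push the straight-line homotopy into $X$ with $p$. All the delicate points — that $W$'s abstract ENR property transfers to its particular embedding in $\mathbb{R}^N$ (it does, since local compactness and local contractibility are topological), and the compactness argument producing $V''$ — are handled correctly.
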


\begin{theorem} The subspaces $W_i$, $1\leq i\leq k$, can be enlarged  to define a categorical covering for $F(\mathbb{R}^n,k)$.
\end{theorem}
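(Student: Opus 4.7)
The plan is to take each $U_i$ to be an open thickening of $W_i$ via the Dold lemma quoted above, and reduce the problem to showing that the inclusion $W_i \hookrightarrow F(\mathbb{R}^n,k)$ is null-homotopic. Since the $W_i$ already cover $F(\mathbb{R}^n,k)$ (every configuration has a partition $A_x$ with $1 \leq |A_x| \leq k$), the $U_i$ will then form a categorical open cover of cardinality $k$, matching the lower bound $cat(F(\mathbb{R}^n,k)) = k$.

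I would first check that each $W_i$ is ENR. By the theorem preceding Corollary~\ref{limit_point}, if $A \neq A'$ are partitions of $k$ with $|A| = |A'| = i$ then no sequence in $F_A$ has a limit in $F_{A'}$; so $W_i = \bigsqcup_{|A|=i} F_A$ is a disjoint union of ENR pieces (by Lemma~\ref{enr}) that are clopen in $W_i$, and hence $W_i$ itself is ENR. The main step is then to show $W_i \hookrightarrow F(\mathbb{R}^n,k)$ is null-homotopic, for which I would reduce to the extremal case $i = k$. The reduction is a \emph{vertical spreading} homotopy: on each level of a configuration $x$, spread the $a_j$ points of that level symmetrically in the $z_n$-direction into an interval of half-width $\epsilon(x) < \tfrac12 \min_j(h_{j+1}(x) - h_j(x))$, ordered by the within-level lex order, leaving the other coordinates fixed. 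This stays in $F(\mathbb{R}^n,k)$ throughout and lands in $W_k$ at time $1$, so it exhibits $W_i \hookrightarrow F$ as homotopic to $W_i \to W_k \hookrightarrow F$. The case $i = k$ is immediate: the only partition of $k$ with $k$ parts is $(1,1,\ldots,1)$, and $F_{(1,\ldots,1),\sigma}$ is homeomorphic to the convex set $(\mathbb{R}^{n-1})^k \times \tilde F(\mathbb{R},k)$ via the map recording the $(z_1,\ldots,z_{n-1})$-coordinates of each point and the ordered $k$-tuple of distinct $z_n$-values. So $W_k$ is a finite disjoint union of contractible components, and since $F(\mathbb{R}^n,k)$ is path-connected the component-wise contractions can be joined by paths in $F$ to yield a single null-homotopy of $W_k \hookrightarrow F$.

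With the null-homotopy of $W_i \hookrightarrow F$ in hand, the Dold lemma produces open $U_i \supset W_i$ with a retraction $r_i : U_i \to W_i$ such that $U_i \hookrightarrow F$ is homotopic to $r_i$ followed by $W_i \hookrightarrow F$; the composite is null-homotopic, so each $U_i$ is categorical and $\{U_1,\ldots,U_k\}$ is the desired cover. The main obstacle I foresee is ensuring global continuity of the spreading homotopy across the different strata $F_{A,\sigma}$ of $W_i$, since both the partition and the within-level lex ordering used in the formula change with the stratum; this is precisely why it matters that the decomposition $W_i = \bigsqcup_{A,\sigma} F_{A,\sigma}$ is by pieces that are clopen in $W_i$, so that continuity need only be verified stratum by stratum.
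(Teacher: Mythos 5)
Your overall architecture coincides with the paper's: produce a null-homotopy of the inclusion $W_i\hookrightarrow F(\mathbb{R}^n,k)$ and then thicken each $W_i$ to an open categorical set via the quoted Dold lemma, with the lower bound supplied by Proposition~\ref{retraction}. Where you genuinely differ is in the null-homotopy itself. The paper reuses step (2) of its motion planner: each level of a configuration is carried onto a far-away line parallel to the $z_n$-axis and the resulting collinear configuration is then moved to a fixed one. You instead spread each level apart in the $z_n$-direction so as to land in $W_k=F_{(1,\ldots,1)}$, and then contract the finitely many convex-parametrized, clopen components of $W_k$ and join them by paths in the connected space $F(\mathbb{R}^n,k)$. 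Your observation that the pieces $F_A$ with $|A|=i$ are relatively clopen in $W_i$ (via the limit-point theorem) is correct and is a point the paper leaves implicit. For $n=2$ your argument is complete and, if anything, more self-contained than the paper's.

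For $n\geq 3$, however, there is a genuine gap in the continuity of your spreading homotopy. Your closing claim is that the finer decomposition of $W_i$ into the strata $F_{A,\sigma}$ is by relatively clopen pieces; what the limit-point theorem gives is clopen-ness of the coarser pieces $F_A$ only. Once $n\geq 3$, two points on a common level are distinguished by their first $n-1$ coordinates, and the lexicographic order between them can flip in a limit when a tie in the first coordinate is broken only at a later one: in $\mathbb{R}^3$ take $x_1^{(m)}=(1/m,0,0)$ and $x_2=(0,1,0)$ on a common level, so that $x_2<x_1^{(m)}$ for every $m$ while the limit satisfies $x_1<x_2$; the whole sequence and its limit lie in $F_{(2)}$ but in different $F_{(2),\sigma}$. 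At such configurations the within-level lex ranks jump, hence so do the target heights your formula assigns, and the spreading map is discontinuous on $F_A$ (continuity stratum by stratum does not suffice because the strata are not clopen). The repair is small but necessary: within each level, order the points by their labels in $\{1,\ldots,k\}$ rather than by lexicographic position. Since the number of levels cannot drop inside the clopen piece $F_A$, no point can migrate between levels in a limit taken within $F_A$, so the assignment of labels to levels is locally constant there and the label-ordered spreading is continuous on all of $W_i$; the rest of your argument then goes through. (The same caveat applies to the paper's own sketch for $n\geq 3$, which likewise invokes the within-level lexicographic order.)
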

\begin{proof}
Note that each $W_i$ is contractible in $F(\mathbb{R}^n,k)$ by using the ideas from steps (1) and (2) in the defintion of the motion planners and 
by connecting the resulting configurations on the corresponding line (see step (2)(a)) to a fixed configuration in $\mathbb{R}^n$. A straightforward application of the previous result allows us to enlarge each subspace $W_i$ to an open subset $U_i\subset F(\mathbb{R}^n,k)$ 
so that $U_i$ is contractible in $F(\mathbb{R}^n,k)$.
The fact that $k$ is the smallest possible size of a categorical covering is a consequence of Proposition~\ref{retraction}.
Therefore the subsets $U_1,\ldots, U_k$ define a categorical cover of $F(\mathbb{R}^n,k)$.
\end{proof}

\end{document}